\newtheorem{theo}{Theorem}[section]
\newtheorem{cor}[theo]{Corollary}
\newtheorem{lem}[theo]{Lemma}
\newtheorem{defi}[theo]{Definition}
\newtheorem{rem}[theo]{Remark}
\numberwithin{equation}{section}
\newcommand\be{\begin{equation}}
\newcommand\ee{\end{equation}}
\newcommand\bp{\begin{pmatrix}}
\newcommand\ep{\end{pmatrix}}
\newcommand\ba{\begin{aligned}}
\newcommand\ea{\end{aligned} }
\newcommand\bea{\begin{array}}
\newcommand\ena{\end{array}}
\begin{document}

\title{\bf On classes of life distributions: Dichotomous Markov Noise Shock Model With Hypothesis Testing Applications}
\author{\sc \small
Mohammad Sepehrifar\thanks{Mississippi State University, MS 39762;
Department of Mathematics and Statistics, msepehrifar@math.msstate.edu.},
Shantia Yarahmadian\thanks{ Mississippi State University, MS 39762;
Department of Mathematics and Statistics, syarahmadian@math.msstate.edu.},
Richard Yamada\thanks{Duke University, Institute for Genome Sciences and Policy; yry@duke.edu.}
}

\maketitle

\begin{abstract}
In this paper, we investigate the probabilistic characteristics of a unit driven by Dichotomous Markov Noise (DMN), as an external random life increasing and decreasing shocks. Using DMN, we will define two new aging classes of the overall increasing/decreasing (OIL/ODL) nature in the long time behavior, which are separated by an exponential steady state regime. In addition, a moment inequality is derived for the system whose life distribution is in an overall life decreasing (ODL) class. We use this inequality to devise a nonparametric testing procedure for exponentiality against an alternative overall decreasing life distribution. 
\end{abstract}

\paragraph{General Theory}
The statistical theory of positive aging systems is an interdisciplinary domain of research, which is encountered in many physical, chemical, biological, and engineering safety systems. Here, we study the positive aging process driven by a Dichotomous Markov Noise (DMN). Describing the deterioration or positive aging of life in the engineering and biological systems, based on different aging criteria, has been the subject of investigation for several decades. For more technical details we refer the reader to \citet {MO}, \citet{SHSH}, \citet{R1}, \citet{BP}, \citet{DKS1}, \citet{A1}, \citet{MA}, and \citet{IS}. The plan of this paper is as follows: In section one, we introduce the concept of Dichotomous Markov Noise (DMN) and  introduce two new aging classes of  Overall Decreasing Life (ODL) and Overall Increasing Life (OIL). The moment inequality approach is introduced in section two. In section three, a new statistics test, based on U-statistics theory is established to test exponentiality against an alternative distribution $F$, where $F$ belongs to an overall decreasing life (ODL) class of distributions. In section four, Monte Carlo simulations for sample sizes $n=10, 20, 30$ are conducted for this test. The calculated empirical power of the test shows the excellent power of this test for some common alternative distributions. The novelty in this study is the development of a nonparametric testing procedure for testing expoenetiality against the two new aging classes for a unit driven by Dichotomous Markov Noise (DMN). Furthermore, this proposed test is of special interest to biostatisticians, who are typically contend and are limited to data that comes at successive patient visits to a facility; such data ignores crucial events that may occur between visits. 

\section{Introduction}\label{intro}
Let $X(t)$, a continuous positive random variable, describing the lifetime (aging) of a mechanical unit, living organism or some other physical, electrical or biological system \citep{DKS1}. We interpret aging as the time that the system has worked satisfactorily without experiencing a failure and then enters into a failed state \citep{DKS1, KW}. In this context, a system with the older age has a shorter lifetime \citep{KW}. In addition, we assume that $X(t)$ is described by Dichotomous Markov  stochastic process. 
    
\subsection{Dichotomous Markov Noise (DMN)}
First order systems driven by dichotomous noise are of practical interest in a wide variety of biological, physical and engineering problems \citep{B1, B2, YBZS}. Similar to an Ornstein-Uhlenbeck process, DMN is colored noise and defined as a two-valued stochastic process with state space values $v_{\pm}$ and constant transition frequencies of $\lambda_{\pm}$; the increase $(+)$ and decrease rate $(-)$ for $\lambda_{\pm}$ occur with probabilities $p_{\pm}(t)$. The switching of $v(t)$ are Poisson processes. The evolution of the unit age, $X(t)$, is represented by the following system of differential equations \citep{B1, YBZS}:

\begin{figure}[h!]
\centering
\includegraphics[scale=.7]{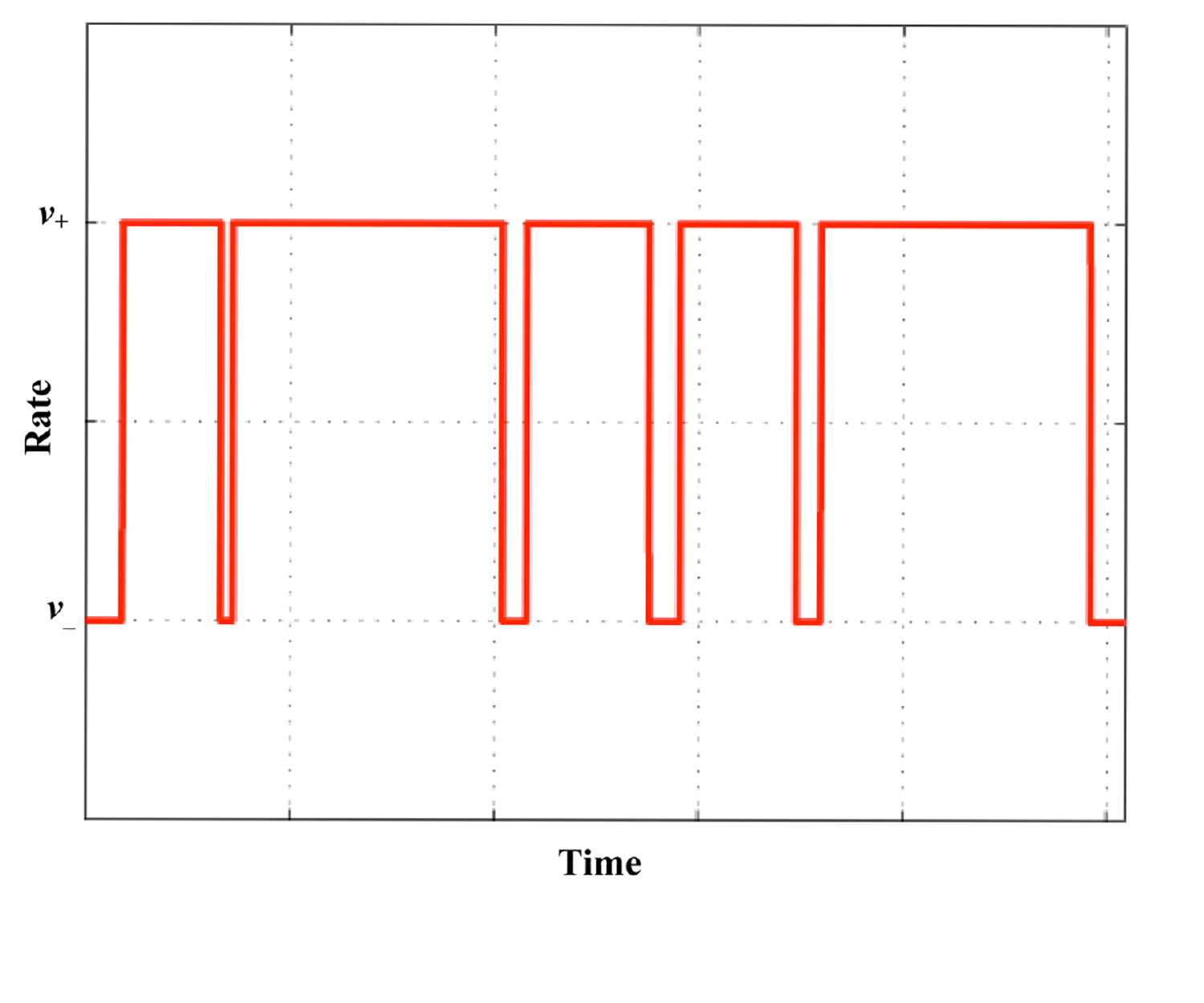}
\caption{Dichotomous Markov Noise.}
\end{figure}

\be
\frac{d}{dt}\bp{p}_+ \\ {p}_- \ep=\bp -\lambda_+ && \lambda_- \\  \lambda_+  && -\lambda_- \ep
\bp {p}_+ \\ {p}_- \ep
\ee

Using the initial conditions $p_+(0)=1$ and $p_-(0)=0$, we can represent the solutions as follows:

\be \label{p1}
{p}_+(t) =\frac{\lambda_-}{{\lambda_++\lambda_-}}+ \frac{\lambda_+}{{\lambda_++\lambda_-}}e^{-{(\lambda_++\lambda_-)}t}
\ee
\be \label{p2}
{p}_-(t) =\frac{\lambda_+}{{\lambda_++\lambda_-}}-\frac{\lambda_+}{{\lambda_++\lambda_-}}e^{-{(\lambda_++\lambda_-)}t}
\ee

where ${p}_+(t)+{p}_-(t)=1$ for all times, and $\frac{1}{\lambda}=\frac{1}{{\lambda_++\lambda_-}}$ represents the mean time between switches of $v(t)$, otherwise known as the rate relaxation time.

\begin{rem}
DMN is a time-homogeneous Markov process and is completely characterized by the following transition probabilities:

\be
P_{ij}(t)=Pr(v(t)=v_i|v(0)=v_j), \qquad i,j \in \{{\pm}\} 
\ee

The temporal evolution of transition probabilities are written as:

\begin{equation}
\frac{d}{dt}\bp P_{-j}(t)\\ P_{+j}(t)\ep=\bp -\lambda_+ && \lambda_- \\  \lambda_+ && -\lambda_- \ep\bp P_{-j}(t)\\ P_{+j}(t)\ep
\end{equation}

with the following solutions:

\begin{equation}
\bp
P_{--}(t)&P_{-+}(t)\\
P_{+-}(t)&P_{++}(t)
\ep=\frac{1}{\lambda_++\lambda_-} \bp \lambda_++\lambda_-e^{-(\lambda_++\lambda_-)t}&&\lambda_+(1-e^{-(\lambda_++\lambda_-)t})\\
\lambda_-(1-e^{-(\lambda_++\lambda_-)t})&&\lambda_-+\lambda_+e^{-(\lambda_++\lambda_-)t} \ep
\end{equation}

In the stationary case we have:
\be \label{stationary}
Pr(v=v_+)=\frac{\lambda_-}{\lambda_++\lambda_-}, \quad Pr(v=v_-)=\frac{\lambda_+}{\lambda_++\lambda_-}
\ee
 \end{rem}

\begin{lem}
The aging life of the system is characterized by $V=\frac{v_+\lambda_--v_-\lambda_+}{\lambda_++\lambda_-}$.
\end{lem}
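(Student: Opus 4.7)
The plan is to identify $V$ as the asymptotic mean drift of the process $X(t)$ driven by the DMN, so that its sign dictates whether age grows, decays, or remains balanced in the long run. The key input is the stationary distribution \eqref{stationary} for $v(t)$ together with the transient formulas \eqref{p1}--\eqref{p2}.

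First I would model the aging dynamics as $\dot X(t)=v(t)$, with the sign convention that $v_+$ is the magnitude of the life-increasing shock and $v_-$ is the magnitude of the life-decreasing shock, so that $v(t)$ contributes $+v_+$ in the $+$ state and $-v_-$ in the $-$ state. Taking expectation gives
\[
\frac{d}{dt}E[X(t)] \;=\; E[v(t)] \;=\; v_+\, p_+(t) \;-\; v_-\, p_-(t).
\]
Substituting \eqref{p1} and \eqref{p2} and grouping terms separates $E[v(t)]$ into a time-independent part plus a transient exponential in $e^{-(\lambda_++\lambda_-)t}$. A direct computation shows the constant part is exactly
\[
V \;=\; \frac{v_+\lambda_- - v_-\lambda_+}{\lambda_++\lambda_-},
\]
which is also what one obtains by computing $v_+\Pr(v=v_+) - v_-\Pr(v=v_-)$ under the stationary law \eqref{stationary}. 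Hence for $t \gg 1/\lambda$ the mean age evolves approximately linearly at rate $V$.

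Finally, I would interpret this asymptotic drift as the aging characteristic of the unit: $V>0$ means the expected age grows without bound (the overall-increasing-life regime), $V<0$ means the expected age is pulled down on average (the overall-decreasing-life regime), and $V=0$ gives a balanced, memoryless steady state consistent with exponential behavior. This is precisely the dichotomy the abstract announces between the OIL/ODL classes separated by an exponential steady state.

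The main obstacle I anticipate is not the algebra — it is essentially a one-line consequence of \eqref{stationary} — but bookkeeping the sign convention on $v_\pm$ so that the subtraction in the numerator of $V$ emerges correctly, and articulating what ``characterizes'' means before the formal OIL/ODL definitions appear. Once one commits to treating $v_-$ as a positive magnitude of decrease, the identification of $V$ with the long-time mean drift follows immediately from the explicit transition probabilities already computed in the preceding remark.
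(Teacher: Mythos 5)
Your argument is essentially the paper's own proof: the authors likewise define the instantaneous average rate $\overline{v}(t)=v_+p_+(t)-v_-p_-(t)$, substitute \eqref{p1}--\eqref{p2} to split off the transient term $e^{-(\lambda_++\lambda_-)t}$, and identify $V$ as $\lim_{t\to\infty}\overline{v}(t)$. Your additional framing of $\overline{v}(t)$ as $\frac{d}{dt}E[X(t)]$ and the check against the stationary law \eqref{stationary} are consistent elaborations, not a different route.
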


\begin{proof}
We define the instantaneous average rate  as:
\be
\overline{v}(t)=v_+{p_+}(t)-v_-{p_-(t)}
\ee
Using \eqref{p1} and \eqref{p1}, we get
\be
\overline{v}(t)=V(1-e^{-(\lambda_++\lambda_-)t})+v_+e^{-(\lambda_++\lambda_-)t}
\ee
and
\be
V=\lim_{t \to \infty}\overline{v}(t)=\frac{v_+\lambda_--v_-\lambda_+}{\lambda_++\lambda_-}
\ee
\end{proof}

\begin{defi}
When $V<0$, life decreasing shocks are more frequent and the system is in an Overall Decreasing Life (ODL) class, whereas when $V>0$ life increasing shocks are more frequent and the system is in an Overall Increasing Life (OIL) class.\\
\end{defi}

The following lemma captures the situation when the life distribution of the system falls in the steady-state:
\begin{lem}
In steady-state, the mean unit age and its probability distribution function (pdf), are obtained through an exponential distribution:
\be \label{valid}
\Lambda=\frac{v_+  v_-}{v_- \lambda_+-v_+  \lambda_- }~~~~~f(x)=\frac{1}{\Lambda} e^{-\frac{x}{\Lambda}}
\ee
\end{lem}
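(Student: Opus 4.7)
The plan is to derive the stationary law of $X(t)$ from the forward Kolmogorov (master) equation for the joint process $(X(t),v(t))$. Let $f_\pm(x,t)$ denote the joint density when the driving process sits in state $\pm$, with the convention already implicit in the preceding lemma (where $\overline v = v_+p_+ - v_-p_-$) that the age drifts at rate $v_+$ in the $+$ state and at rate $-v_-$ in the $-$ state. The master equation then reads
\[
\begin{aligned}
\partial_t f_+ &= -v_+\,\partial_x f_+ - \lambda_+ f_+ + \lambda_- f_-,\\
\partial_t f_- &= +v_-\,\partial_x f_- + \lambda_+ f_+ - \lambda_- f_-.
\end{aligned}
\]

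Setting $\partial_t\equiv 0$ reduces this to a first-order $2\times 2$ linear ODE system in $x$ for $(f_+,f_-)$. A short eigenvalue calculation on its coefficient matrix produces the characteristic equation $\mu\bigl(\mu + (v_-\lambda_+-v_+\lambda_-)/(v_+v_-)\bigr)=0$, so every stationary solution is a sum of a constant mode and an exponential mode $e^{-x/\Lambda}$ with $\Lambda = v_+v_-/(v_-\lambda_+-v_+\lambda_-)$. Integrability on $[0,\infty)$ kills the constant mode, forcing $f_\pm(x)=A_\pm e^{-x/\Lambda}$; consequently the marginal $f=f_++f_-$ is automatically exponential with rate $1/\Lambda$. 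Plugging the ansatz back into either steady-state equation pins down the ratio $A_+/A_-$, after which the normalization $\int_0^\infty f\,dx = 1$ delivers $f(x)=(1/\Lambda)e^{-x/\Lambda}$, and the mean equals $\Lambda$ by a one-line integration.

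The main subtlety is a sign/convention check: positivity of $\Lambda$ amounts to $v_-\lambda_+>v_+\lambda_-$, i.e.\ $V<0$, so an exponential stationary law is only attainable in the decreasing-drift (ODL) regime, and the word ``steady-state'' in the statement should be read as the long-time stationary profile of $X(t)$ rather than the bifurcation point $V=0$ (at which $\Lambda$ degenerates). The step I expect to be the main obstacle is making sure the boundary behavior at $x=0$---reflecting that age cannot be negative---is compatible with the pure two-component exponential ansatz; this should reduce to verifying that the probability currents $v_+ f_+(0)$ and $v_-f_-(0)$ balance at the origin, a condition that is in fact already enforced by the amplitude ratio $A_+/A_-$ determined above.
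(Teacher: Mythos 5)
Your proof follows essentially the same route as the paper: both start from the same stationary master equation for $(f_+,f_-)$, reduce it to a first-order linear ODE system in $x$ whose characteristic roots are $0$ and $-1/\Lambda$ (the paper phrases this as the second-order equation $P_\pm''+\Lambda^{-1}P_\pm'=0$, you as an eigenvalue computation on the $2\times 2$ matrix), kill the constant mode by decay/integrability at infinity to get $f_\pm(x)=A_\pm e^{-x/\Lambda}$, and normalize $f=f_++f_-$. Your added observations --- that positivity of $\Lambda$ forces $v_-\lambda_+>v_+\lambda_-$ (i.e.\ $V<0$) and that the boundary currents at $x=0$ must balance --- are sensible refinements that the paper's proof does not address, but they do not alter the core argument.
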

\begin{proof}

The time evolution of the unit age distribution $P_{\pm} (x,t)$ is represented by:
\be \label{ev}
\frac{d}{dt}\bp P_+(x,t)\\P_-(x,t)\ep=
\bp
-v_+\frac{\partial}{\partial x}-\lambda_+ && \lambda_-\\ \lambda_+&&v_-\frac{\partial}{\partial x}-\lambda_-
\ep
\bp P_+(x,t)\\P_-(x,t)\ep
\ee

In the steady state, since there is no dependence on time , the time derivatives are set to zero and \eqref{ev} is reduced to the following:

\be \label{ev1}
\frac{d}{dx}\bp P_+(x,t)\\P_-(x,t)\ep=
\bp
-\frac{\lambda_+}{v_+} && \frac{\lambda_-}{v_+}\\ -\frac{\lambda_+}{v_-} &&\frac{\lambda_-}{v_-} 
\ep
\bp P_+(x,t)\\P_-(x,t)\ep
\ee

or equivalently to:

\be
\frac{d^2P_{\pm}}{dx^2}+\frac{1}{\Lambda}\frac{dP_{\pm}}{dt}=0
\ee

Using the fact that $\lim_{x \to \infty} P_{\pm}(x)=0$, the solutions to \eqref{ev1} are written as:
\be
P_{\pm}(x)=A_{\pm}e^{-\frac{x}{\Lambda}}
\ee
where $A_{\pm}$ are two real constants. Now  $f(x)=P_+(x)+P_-(x)$ and  $\int_0^{\infty}f(x)dx=1$ will establish the result.
\end{proof}

\section{Statistical Approach:  Moment Inequality Approach}
Let $F(x)$ be the c.d.f. of $X(t)$ and let ${\bar F}(x)=1-F(x)=Pr[X(t)> x]$ be the survival function. The survival function of a unit of age $x_0$, $F_{x_0}(x)$, is defined as the conditional probability that a unit of age $x_0$ (i.e., a unit having the initial age $x_0$) will survive for an additional $x$ units of time:

\be
{\bar F}(x|x_0)=Pr[X (t)> x + x_0  | X(t) > x_0 ]  =\frac {{\bar F}(x + x_0 )}{{\bar F}(x_0)}
\ee

If $x_0=0$, then by definition ${\bar F}(x|0)=\bar F(x)$ is the survival function of a new unit. Any study concerning the phenomenon of aging must be based on ${\bar F}(x|x_0)$ and functions related to it. Associated with $X$ is the notion of " random remaining life" at age $x_0$, denoted by $X_{x_0}$ which has a survival function ${\frac{{\bar F}(x + x_0 )}{{\bar F}(x_0)}}$.The concept of mean remaining life (MRL) has found applications in biometry, actuarial science, and engineering, nonparametric life testing and reliability analysis. For reviews of the applications of MRL, the reader can refer to Muth \citep{M1}, Hall and Wellner \citep{HW}, Gupta and Gupta \citep{GG} and the various references cited in these works.
\\
Now, let $X_1, X_2, ...$ be a sequence of independent identically distributed random lives with survival function ${\bar F}=1-F$ and finite mean $\Lambda$ ( $\Lambda$ is described by \eqref{valid}).
\begin{rem}
The distribution \eqref{valid} is meaningful if and only if  $\frac{f(x)}{\bar F(x)}$ $=$ $\frac{1}{\Lambda}$ .
\end{rem}
\begin{rem}
 Abouammoh et. al. \citep{AAB} introduced a model of accumulated additive damages that cause the failure of a device when it exceeds a certain value threshold. Abouammoh et. al. \citep{AAQ} also established an empirical test statistics for testing exponentiality when the alternative to exponentiality belongs to the new better than renewal used in expectation (NBRUE). 
 \end{rem}

 Let $N(t)$ be the total number of shocks governed by Poisson process, which take place in $[0,t)$. Then the remaining life at $t$ is $L(t)=\sum_{i=1}^{N(t)+1}X_i
-t$, assuming $\sum_{i=1}^{N(t)}X_i-x>0$ where $x$ is the fixed threshold. In this setting $L(t)$ converges weekly to a renewal random life denoted by ${\tilde X}$ with survival function ${\tilde W}(x)=\frac{1}{\Lambda}\int_x^{\infty} {\bar F}(u) du$ \citep{IM}. Note that ${\tilde X}$ can be studied through the remaining (residual)
life $X_{x_0}=[X-x_0 | X>x_0]$ at age $x_0$.

\begin {theo} \label{MRL}
For all $x \ge 0$, ${\tilde W}(x)=\frac{1}{\Lambda} E(X-x)I(X-x)$  where $I(u)=1$, if $u > 0$ and $0$ otherwise.
\end{theo}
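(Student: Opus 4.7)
The plan is to unpack the definition of $\tilde{W}$ coming from the renewal setup and show that the right-hand side of the claim is just a re-expression of the integral $\frac{1}{\Lambda}\int_x^\infty \bar{F}(u)\,du$ via a standard Fubini (layer-cake) computation.

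First I would write the nonnegative random variable $(X-x)I(X-x)$, which is simply the positive part $(X-x)^+$, as a layer-cake integral:
\be
(X-x)^+ = \int_0^\infty I\bigl(X-x > s\bigr)\,ds = \int_0^\infty I(X > x+s)\,ds.
\ee
Taking expectations and applying Tonelli's theorem (everything is nonnegative, and finiteness of the outer integral is guaranteed by the assumption $E(X)=\Lambda<\infty$), I exchange expectation and integration:
\be
E\bigl[(X-x)I(X-x)\bigr] = \int_0^\infty \Pr(X > x+s)\,ds = \int_0^\infty \bar{F}(x+s)\,ds.
\ee

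Next, I would perform the change of variables $u = x+s$, $du = ds$, which sends the lower limit $s=0$ to $u=x$ and leaves the upper limit at $\infty$, giving
\be
E\bigl[(X-x)I(X-x)\bigr] = \int_x^\infty \bar{F}(u)\,du.
\ee
Dividing both sides by $\Lambda$ and invoking the definition $\tilde{W}(x)=\frac{1}{\Lambda}\int_x^\infty \bar{F}(u)\,du$ recorded in the paragraph preceding the theorem finishes the proof.

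There is essentially no obstacle here: the only subtlety is the justification of the Fubini/Tonelli swap, which is immediate from nonnegativity together with the finiteness of $\Lambda$ (without a finite mean, both sides can be infinite but the identity still holds in $[0,\infty]$). An alternative route is integration by parts on $\int_x^\infty(u-x)\,dF(u)$, using the finite-mean hypothesis to kill the boundary term at infinity; I would prefer the layer-cake route because it avoids any smoothness assumption on $F$ and uses nothing beyond the definition of $\bar{F}$.
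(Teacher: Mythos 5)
Your argument is correct and complete. Note, however, that the paper itself supplies no proof of this theorem at all: it simply writes ``see \citep{IM}'' and defers to that reference. So your layer-cake computation is not an alternative to the paper's argument so much as a replacement for a missing one. The substance of the theorem is exactly the identity
\be
E\bigl[(X-x)I(X-x)\bigr]=\int_x^\infty \bar F(u)\,du,
\ee
since the paper defines ${\tilde W}(x)=\frac{1}{\Lambda}\int_x^\infty \bar F(u)\,du$ in the paragraph preceding the statement; your Tonelli swap of $E$ with $\int_0^\infty I(X>x+s)\,ds$ followed by the substitution $u=x+s$ establishes precisely this, with the nonnegativity of the integrand justifying the interchange and $\Lambda<\infty$ guaranteeing finiteness. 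Your remark that the layer-cake route avoids smoothness assumptions on $F$ (in contrast to the integration-by-parts alternative on $\int_x^\infty(u-x)\,dF(u)$) is apt, since $F$ here is an arbitrary life distribution. The one cosmetic point worth flagging is that the theorem's notation $I(X-x)$ is an abuse: with $I(u)=1$ for $u>0$, the expression $(X-x)I(X-x)$ means $(X-x)^+$, which is exactly how you read it.
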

\begin{proof}
see \citep{IM}.
\end{proof}

\begin{defi}\label{O}
A life distribution $F$ on $(0, \infty)$, with $F(0^-)=0$  is called overall decreasing life (ODL), if 
\be \label{overall}
\int_t^{\infty}{\tilde W}(x)dx \le \Lambda {\tilde W}(t), ~~~~t \ge 0, 
\ee
where $\Lambda=\int_0^{\infty}{\bar F}(u)du < \infty$.
\end{defi}

\begin{rem}
Life distribution F belongs to $ODL$ if and only if $\frac{f(x)}{\bar F(x)}$ $<$ $\frac{1}{\Lambda}$.
\end{rem}

There are a number of classes that have been suggested in the literature to categorize distributions based on their aging properties or their dual. Among the most practical aging classes of life distributions, we study the {\it Increasing Failure Rate (IFR)} . A survival distribution is said to be in the $IFR$ class if and only if ${\frac{{\bar F}(x + x_0 )}{{\bar F}(x_0)}}$ is decreasing  in $x$ for all $x_0$. The definition says that the probability of an individual of age $x$ survives an additional $x_0$ period of time is decreasing with time.
 
\begin{rem} \label{IFR}
Barlow and Proschan \citep{BP} showed that an $IFR$ life distribution $F$ with mean $\Lambda$ is bounded below by $e^{-\frac{t}{\Lambda}} ~~ \text{for}~ t \le \Lambda$.
\end{rem}

\begin{lem}
The implications between the $IFR$ and $ODL$ classes are as follows:
$$IFR \Rightarrow ODL $$
\end{lem}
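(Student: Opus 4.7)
The plan is to reduce the ODL inequality to a monotonicity statement for a single auxiliary function $g(t)$ on $[0,\infty)$ and then invoke the IFR hypothesis through the mean residual life. First I would unpack $\tilde W$ from Definition \ref{O} and apply Fubini's theorem to convert the ODL condition \eqref{overall} into the equivalent $\bar F$-only form
\[
\int_t^\infty (u-t)\bar F(u)\,du \;\le\; \Lambda\int_t^\infty \bar F(u)\,du, \qquad t\ge 0,
\]
which is the same as $g(t)\le 0$ for every $t\ge 0$, where
\[
g(t) \;:=\; \int_t^\infty \bigl(u-t-\Lambda\bigr)\bar F(u)\,du.
\]

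Next, differentiating under the integral sign gives
\[
g'(t) \;=\; \Lambda\bar F(t)-\int_t^\infty \bar F(u)\,du \;=\; \bar F(t)\bigl(\Lambda-m(t)\bigr),
\]
where $m(t)=\bar F(t)^{-1}\int_t^\infty \bar F(u)\,du$ denotes the mean residual life of $X$. This is where IFR enters the picture: since $\bar F(u+t)/\bar F(t)$ is nonincreasing in $t$ for every fixed $u\ge 0$, integrating in $u$ over $[0,\infty)$ yields the standard IFR $\Rightarrow$ DMRL $\Rightarrow$ NBUE chain, hence $m(t)\le m(0)=\Lambda$. Therefore $g'(t)\ge 0$ and $g$ is nondecreasing on $[0,\infty)$.

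It remains to identify the boundary value at infinity. Rewriting $g(t)=\int_0^\infty v\,\bar F(t+v)\,dv-\Lambda\int_0^\infty \bar F(t+v)\,dv$ and using that an IFR distribution with finite mean has an at-least-exponentially decaying tail (and hence finite moments of all orders), dominated convergence gives $g(t)\to 0$ as $t\to\infty$. Combining this with the monotonicity just established forces $g(t)\le 0$ for every $t\ge 0$, which is precisely \eqref{overall}; hence $F\in\mathrm{ODL}$.

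The step I expect to require the most care is the verification of $g(\infty)=0$: the linear factor $u-t-\Lambda$ is unbounded, so some tail control on $\bar F$ is needed before dominated convergence can be applied. Fortunately the IFR hypothesis supplies exponential tail decay for free, so no extra moment assumption beyond the finite mean $\Lambda$ already built into the setup is required.
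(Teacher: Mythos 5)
Your proof is correct, and it is a genuinely different (and substantially more complete) argument than the one in the paper. The paper disposes of the lemma in one line by combining its failure-rate ``characterization'' of ODL (the remark asserting $F\in\mathrm{ODL}$ iff $f(x)/\bar F(x)<1/\Lambda$) with the Barlow--Proschan lower bound $\bar F(t)\ge e^{-t/\Lambda}$ for $t\le\Lambda$; note that the cited characterization cannot hold verbatim, since a distribution with mean $\Lambda$ whose failure rate is everywhere strictly below $1/\Lambda$ would satisfy $\bar F(t)>e^{-t/\Lambda}$ and hence $\int_0^\infty\bar F>\Lambda$, a contradiction --- so the paper's route does not actually deliver the implication as stated. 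Your argument instead unpacks $\tilde W$ via Fubini, reduces ODL to $g(t)\le 0$ for $g(t)=\int_t^\infty(u-t-\Lambda)\bar F(u)\,du$, and runs the standard chain IFR $\Rightarrow$ DMRL $\Rightarrow$ NBUE to get $g'\ge 0$, finishing with $g(\infty)=0$; this is self-contained and exposes the real content of the lemma, namely that ODL is exactly the statement that the equilibrium distribution $\tilde W$ has mean residual life bounded by $\Lambda$. Two small remarks: the boundary step is cheaper than you fear, since NBUE alone gives
\begin{equation*}
\int_t^\infty(u-t)\bar F(u)\,du=\int_t^\infty\!\Big(\int_x^\infty\bar F(u)\,du\Big)dx\le\Lambda\int_t^\infty\bar F(x)\,dx\longrightarrow 0,
\end{equation*}
so no appeal to exponential tail decay or higher moments is needed; and since $\bar F$ may have jumps, $g'$ should be read as an a.e.\ derivative of the absolutely continuous function $g$ (which suffices, as $g(t)=g(\infty)-\int_t^\infty g'(s)\,ds$ still holds). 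Neither point affects the validity of your proof.
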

\begin{proof}
$IFR \Rightarrow ODL$:\\
This follows from direct application of Remark 3.4 and Remark 3.5.\

\end{proof}

\begin{theo}
If a lifetime distribution $F$ belongs to the ODL class, then for all integers $r \ge 0$
\be \label{ineq}
E\Big\{\frac{1}{2(r+1)}X_1^{r+1}X_2^{2}-\frac{1}{r+2}X_1^{r+2}X_2+\frac{1}{2(r+3)}X_1^{r+3}\Big\} \le \Lambda E\Big\{\frac{1}{r+1}X_1M_{1,2}^{r+1}-\frac{1}{r+2}X_1M_{1,2}^{r+2}\Big\}
\ee 
where $X_1,~~X_2$ are two stochastically independent copies of life $X$ and $M_{1,2}=min(X_1,X_2)$.
\end{theo}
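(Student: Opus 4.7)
The plan is to derive the inequality by weighting the ODL condition by a nonnegative kernel and translating the resulting double integrals into expectations over two i.i.d.\ copies $X_1,X_2$ of $X$ with minimum $M_{1,2}$.

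First I would put \eqref{overall} into a form that no longer involves $\tilde W$. Using $\Lambda\tilde W(x)=\int_x^\infty \bar F(u)\,du$ and applying Fubini on the left of \eqref{overall}, one gets the equivalent statement
\[
\int_t^\infty (u-t)\,\bar F(u)\,du \;\le\; \Lambda\int_t^\infty \bar F(u)\,du,\qquad t\ge 0,
\]
which in probabilistic language reads $\tfrac12 E[((X-t)^+)^2]\le \Lambda\, E[(X-t)^+]$.

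Next, I would multiply both sides by the nonnegative weight $t^r\bar F(t)$ and integrate $t$ over $(0,\infty)$. Since all integrands are nonnegative, the inequality is preserved and Fubini is available. Interpreting $\bar F(t)=P(X_1>t)$ and $\bar F(u)=P(X_2>u)$ with $X_1,X_2$ independent copies of $X$, the inner $u$-integrals become
\[
\int_t^\infty I(X_2>u)\,du=(X_2-t)^+,\qquad \int_t^\infty(u-t)I(X_2>u)\,du=\tfrac12\bigl((X_2-t)^+\bigr)^2,
\]
and the latent indicator $I(X_2>t)$ hidden inside the positive parts combines with the outer $I(X_1>t)$ to give $I(t<M_{1,2})$. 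The weighted ODL inequality therefore reduces to
\[
\tfrac12\,E\!\int_0^{M_{1,2}}(X_2-t)^2\, t^r\,dt \;\le\; \Lambda\, E\!\int_0^{M_{1,2}}(X_2-t)\, t^r\,dt.
\]

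Finally, I would expand $(X_2-t)^2$ and $(X_2-t)$ and perform the elementary $t$-integrations over $[0,M_{1,2}]$ to obtain polynomial expressions in $X_1$, $X_2$ and $M_{1,2}$ with the coefficients $\tfrac{1}{2(r+1)}$, $\tfrac{1}{r+2}$, $\tfrac{1}{2(r+3)}$ on the left and $\tfrac{1}{r+1}$, $\tfrac{1}{r+2}$ on the right; the symmetrization $E[X_2\, g(X_1,X_2)]=E[X_1\, g(X_2,X_1)]$ applied to the symmetric function $g=M_{1,2}^{r+1}$ then matches the displayed form of the theorem. The main obstacle is the Fubini bookkeeping in the previous step---showing that the product of indicators really collapses to $I(t<M_{1,2})$ so that the outer integrals terminate at the minimum---while the polynomial integration and the required moment finiteness (ensured by $\Lambda<\infty$ together with the ODL property) are routine.
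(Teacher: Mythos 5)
Your strategy is the same as the paper's --- rewrite the ODL condition as $\tfrac12 E[((X-t)^+)^2]\le\Lambda\,E[(X-t)^+]$, weight by $t^r\bar F(t)$, integrate in $t$, and pass to expectations in two independent copies via Fubini --- and your bookkeeping up to
\[
\tfrac12\,E\int_0^{M_{1,2}}t^r(X_2-t)^2\,dt\;\le\;\Lambda\,E\int_0^{M_{1,2}}t^r(X_2-t)\,dt
\]
is correct, indeed more careful than the paper's. The gap is in your last sentence. Integrating the left side over $[0,M_{1,2}]$ gives
\[
E\Big\{\tfrac{1}{2(r+1)}X_2^{2}M_{1,2}^{r+1}-\tfrac{1}{r+2}X_2M_{1,2}^{r+2}+\tfrac{1}{2(r+3)}M_{1,2}^{r+3}\Big\},
\]
i.e.\ powers of $M_{1,2}$, not of $X_1$, and no symmetrization of the kind you invoke can turn this into the theorem's left-hand side: $X_2^2$ is coupled to $M_{1,2}^{r+1}$, $X_2$ to $M_{1,2}^{r+2}$, and the pure term is $M_{1,2}^{r+3}$, whereas the displayed statement wants $X_1^{r+1}X_2^2$, $X_1^{r+2}X_2$, $X_1^{r+3}$. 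So your argument, carried to the end, proves a different inequality from the one displayed.

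The reason you cannot close this gap is that the displayed statement is not what the ODL condition yields. The paper reaches it by silently discarding the indicator $I(X_2>t)$ inside $((X_2-t)^+)^2$, i.e.\ replacing the upper limit $M_{1,2}$ by $X_1$ on the left; since $t^r(X_2-t)^2/2>0$ on $(X_2,X_1)$ when $X_2<X_1$, this strictly inflates the left-hand side beyond the quantity the ODL condition controls. Concretely, for the unit exponential --- which satisfies \eqref{overall} with equality and is therefore ODL --- and $r=0$, the theorem's left side is $\tfrac12(1)(2)-\tfrac12(2)(1)+\tfrac16(6)=1$, while the right side (in the form of Corollary \ref{13}) is $E\{X_1M_{1,2}\}-\tfrac12 E\{M_{1,2}^2\}=\tfrac34-\tfrac14=\tfrac12$, so the claimed inequality fails at the null itself. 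Your derivation is the correct one; the statement (and Corollary \ref{13}, and the test statistic built from it) should be restated with $M_{1,2}^{r+1},M_{1,2}^{r+2},M_{1,2}^{r+3}$ in place of $X_1^{r+1},X_1^{r+2},X_1^{r+3}$ on the left, in which case equality does hold for the exponential, as the null hypothesis requires.
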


\begin{proof}
$F$ belongs to the ODL class  if $\int_t^{\infty}{\tilde W}(x)dx \le \Lambda {\tilde W}(t)$ . Apply Theorem \ref{MRL}, one may rewrite \eqref{overall} as follows: 
\be
\int_0^{\infty} \int_t^{\infty} t^r {\bar F}(t){\tilde W}(x)dx dt \le \Lambda \int_0^{\infty} t^r {\bar F}(t){\tilde W}(t) dt
\ee
\be
\begin{split}
L.H.S &= \int_0^{\infty} \int_t^{\infty} t^r {\bar F}(t){\tilde W}(x)dx dt
\\
&= \frac{1}{\Lambda} E \Big \{\int_0^{X_1}\Big(\frac{1}{2}t^rX_2^2-t^{r+1}X_2+\frac{1}{2}t^{2+r}\Big )dt \Big\}
\\
&= E\Big\{\frac{1}{2(r+1)}X_1^{r+1}X_2^{2}-\frac{1}{r+2}X_1^{r+2}X_2+\frac{1}{2(r+3)}X_1^{r+3}\Big\}
\end{split}
\ee
and
\be
\begin{split}
R.H.S &=\Lambda \int_0^{\infty} t^r {\bar F}(t){\tilde W}(t) dt
\\
&=E\Big\{\int_0^{\infty}(X_1-t)I(X_1>t)t^r\bar{F}(t)dt\Big\}
\\
&=E\Big\{\int_0^{\min(X_1,X_2)}(X_1-t)I(X_1>t)t^r\bar{F}(t)dt\Big\}
\\
&= E\Big\{\frac{1}{r+1}X_1M_{1,2}^{r+1}-\frac{1}{r+2}X_1M_{1,2}^{r+2}\Big\}
\end{split}
\ee
\end{proof}

When $r=0$ we deduce the following special case:
\begin {cor} \label{13}
Setting $r=0$ in \eqref{ineq}:
\be
E\Big\{\frac{1}{2}X_1X_2^2-\frac{1}{2}X_1^2X_2+\frac{1}{6}X_1^3\Big\} \le \Lambda E\{X_1M_{1,2}-\frac{1}{2}M_{1,2}^2\}
\ee
\end{cor}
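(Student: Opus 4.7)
The plan is immediate: this is a direct specialisation of the preceding theorem to $r=0$. I would first substitute $r=0$ into the three coefficients on the left-hand side of \eqref{ineq}, obtaining $\frac{1}{2(r+1)}=\frac{1}{2}$, $\frac{1}{r+2}=\frac{1}{2}$, and $\frac{1}{2(r+3)}=\frac{1}{6}$, which reproduces the left-hand side of the corollary verbatim, namely $E\bigl\{\tfrac{1}{2}X_1 X_2^{2}-\tfrac{1}{2}X_1^{2}X_2+\tfrac{1}{6}X_1^{3}\bigr\}$.

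For the right-hand side I would briefly return to the integral representation that appears inside the proof of the theorem, namely $E\bigl[\int_0^{M_{1,2}} (X_1-t)\, t^r\,dt\bigr]$, since setting $r=0$ there and carrying out the elementary polynomial integration gives $X_1 M_{1,2}-\tfrac{1}{2}M_{1,2}^{2}$. Multiplying by $\Lambda$ recovers the right-hand side exactly as stated in the corollary.

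Because the whole content is a bookkeeping substitution into an inequality that has already been established, there is no genuine obstacle. The one point worth a second glance is the arithmetic of the constants coming from $r+1$, $r+2$, and $r+3$ in the denominators; once those are read off correctly the inequality appears in precisely the form displayed, and the proof terminates.
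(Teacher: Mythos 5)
Your proposal is correct and follows the same route as the paper: the corollary is a pure substitution of $r=0$ into the theorem, and your arithmetic for the left-hand coefficients $\tfrac{1}{2}$, $\tfrac{1}{2}$, $\tfrac{1}{6}$ is right. One point in your write-up deserves more credit than you give it: your decision to verify the right-hand side by returning to the integral representation $E\bigl[\int_0^{M_{1,2}}(X_1-t)\,t^r\,dt\bigr]$ rather than substituting into the theorem's displayed right-hand side is not mere bookkeeping --- it is actually necessary. Literal substitution of $r=0$ into the theorem's stated bound $\Lambda E\bigl\{\tfrac{1}{r+1}X_1M_{1,2}^{r+1}-\tfrac{1}{r+2}X_1M_{1,2}^{r+2}\bigr\}$ would produce $\Lambda E\bigl\{X_1M_{1,2}-\tfrac{1}{2}X_1M_{1,2}^{2}\bigr\}$, which does not match the corollary's $\Lambda E\bigl\{X_1M_{1,2}-\tfrac{1}{2}M_{1,2}^{2}\bigr\}$. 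The integral $\int_0^{M_{1,2}}(X_1-t)\,dt = X_1M_{1,2}-\tfrac{1}{2}M_{1,2}^{2}$ confirms that the corollary's form is the correct one and that the extra factor of $X_1$ in the theorem's displayed second term is a typographical slip; your derivation silently repairs it.
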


\section{Statistical Testing of  ${\tilde X}$ Aging (ODL Alternative) }
The goal of this section is to create a symmetric kernel, allowing the application of 
U-statistics theory for testing exponentiality against any other alternatives in the ODL aging class.  We use corollary \eqref{13} to test $H_0: F~ \text{is exponential}$ vs $H_a: F~ \text{is ODL}$.  We define the measure of departure from the null hypothesis as:

\be
\delta=E \Big\{\frac{1}{2}X_1X_2^2-\frac{1}{2}X_1^2X_2+\frac{1}{6}X_1^3-\Lambda(X_1M_{1,2}-\frac{1}{2}M^2_{1,2})\Big\}.
\ee
To make the test statistic scale invariate, we adjust $\delta$ by $\Lambda^3$ and define

\be
\Delta=\frac{1}{\Lambda^3}E \Big\{\frac{1}{2}X_1X_2^2-\frac{1}{2}X_1^2X_2+\frac{1}{6}X_1^3-\Lambda(X_1M_{1,2}-\frac{1}{2}M^2_{1,2})\Big\}
\ee
Note that $\Delta=0$ under the null hypothesis and $\Delta<0$ under the alternative.
\\\\
Let $X_1, X_2, \dots X_n$  denote a random sample from $F$. A U-statistics form of $\delta$ is defined as

\be
{\hat \delta}=\frac{2}{n(n-1)}\sum_{1\le i\le j \le n} \Big\{ \frac{1}{2}X_iX_j^2-\frac{1}{2}X_i^2X_j+\frac{1}{6}X_i^3-X_i^2M_{i,j}+\frac{1}{2}X_iM_{i,j}^2 \Big \}
\ee
Now, we can estimate $\Delta$ by ${\hat \Delta}=\frac{{\hat \delta}}{{\bar X}^3}$, where ${\bar X}=\frac{1}{n}\sum_i X_i$. To perform this test, we calculate ${\hat \Delta}$ from data and reject $H_0$ in favor of $H_1$ if the normal variate value $Z_{1-\alpha}$ exceeds ${\hat \sigma}_0^{-1}n^{\frac{1}{2}}{\hat \Delta}$. In this calculation one may find the value of variance through the standard argument of $U$-statistics theory \citep{L} as
follows:

\begin{theo}
As $n \to \infty$, $\sqrt{n}(\hat \Delta-\Delta_0)$ is asymptotically distributed normal with mean $0$ and variance

\be
\ba
\sigma^2=\frac{1}{\mu^6}Var \Big \{\frac{1}{6}X_1^3+\frac{1}{6}\int_0^{\infty}x^3dF(x)-\frac{1}{2}X_1^3{\bar F}(X_1)+\frac{3}{2}(X_1\int_0^{X_1}x^2dF(x)\\+\int_0^{X_1}x^3dF(x)-X_1^2\int_0^{X_1}xdF(x))\Big \}
\ea
\ee
under the null hypothesis ${\hat \sigma}_0=1.173$.
\end{theo}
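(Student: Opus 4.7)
The plan is to recognize $\hat\delta$ as a U-statistic and apply the standard Hoeffding asymptotic theory, then transfer the conclusion to $\hat\Delta$ via a Slutsky argument, and finally carry out the bookkeeping needed to pin down the variance constant under $H_0$.

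First I would rewrite $\hat\delta$ with a fully symmetric kernel. The summand as written,
\[
\psi(X_i,X_j)=\tfrac12 X_iX_j^{2}-\tfrac12 X_i^{2}X_j+\tfrac16 X_i^{3}-X_i^{2}M_{i,j}+\tfrac12 X_iM_{i,j}^{2},
\]
is not symmetric in $(X_i,X_j)$, so I would replace it by $\phi(x,y)=\tfrac12\bigl(\psi(x,y)+\psi(y,x)\bigr)$. The resulting $\hat\delta$ is then literally a U-statistic of degree two with symmetric kernel $\phi$ and mean $\delta$, and by construction $\delta=0$ under $H_0$ (this is exactly the exponential boundary case of Corollary~\ref{13}).

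Next I would invoke Hoeffding's theorem for one-sample U-statistics: provided $E\phi(X_1,X_2)^{2}<\infty$ (which is immediate under exponential $H_0$ since all moments exist), $\sqrt n(\hat\delta-\delta)\Rightarrow N(0,4\zeta_1)$ where $\zeta_1=\mathrm{Var}(h_1(X_1))$ and $h_1(x)=E\phi(x,X_2)$. The core computation is to unfold $h_1$. Treating the two halves of the symmetrization and using $E[M_{1,2}^{k}\mid X_1=x]=\int_0^{x}y^{k}dF(y)+x^{k}\bar F(x)$-style identities, together with integration by parts to collapse $\int_0^{x}y^{k}dF(y)$ against powers of $x$, leads precisely to
\[
h_1(X_1)\;\propto\;\tfrac16 X_1^{3}+\tfrac16\!\int_0^{\infty}\!x^{3}dF(x)-\tfrac12 X_1^{3}\bar F(X_1)+\tfrac32\!\left(X_1\!\int_0^{X_1}\!x^{2}dF(x)+\!\int_0^{X_1}\!x^{3}dF(x)-X_1^{2}\!\int_0^{X_1}\!x\,dF(x)\right),
\]
which matches the expression displayed in the statement.

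To pass from $\hat\delta$ to $\hat\Delta=\hat\delta/\bar X^{3}$, I would use the strong law $\bar X\to\mu$ a.s.\ together with Slutsky: under $H_0$, $\Delta_0=0$, so
\[
\sqrt n\,\hat\Delta=\frac{\sqrt n(\hat\delta-0)}{\bar X^{3}}\;\Rightarrow\;\frac{1}{\mu^{3}}N(0,4\zeta_1)=N\!\left(0,\frac{4\zeta_1}{\mu^{6}}\right),
\]
giving the $\mu^{-6}$ factor in the stated $\sigma^{2}$. (Off the null one would need the full delta method on $(\hat\delta,\bar X)$, but on the null $\delta=0$ kills the cross term.)

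The main obstacle is the numerical evaluation $\hat\sigma_0=1.173$. Here I would specialize to $F(x)=1-e^{-x/\mu}$ (and, by scale invariance of $\hat\Delta$, take $\mu=1$), substitute into the $h_1$ expression above, and compute $E h_1(X)$ and $E h_1(X)^{2}$. Each term reduces to a finite linear combination of integrals of the form $\int_0^{\infty}x^{k}e^{-\alpha x}dx=k!/\alpha^{k+1}$ after expanding $\bar F$ and $\int_0^{x}y^{k}dF(y)=k!-e^{-x}\sum_{j=0}^{k}k!\,x^{j}/j!$; these are tedious but entirely mechanical. The arithmetic is the fiddly part, since one must keep the many cross terms in order and then take a square root, but it is routine and yields the quoted constant $1.173$, completing the proof.
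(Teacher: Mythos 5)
Your proposal is correct and follows essentially the same route as the paper: both treat $\hat\delta$ as a degree-two U-statistic and apply the standard projection theorem, your $4\zeta_1=4\,\mathrm{Var}\bigl(\tfrac12(E\{\Phi(x,X_2)\}+E\{\Phi(X_2,x)\})\bigr)$ being identical to the paper's $\mathrm{Var}\bigl\{E\{\Phi(X_1,X_2|X_1)\}+E\{\Phi(X_2,X_1|X_1)\}\bigr\}$. You are in fact more complete than the paper, which never spells out the Slutsky step producing the $\mu^{-6}$ factor nor the exponential-null computation behind $\hat\sigma_0=1.173$, both of which you supply.
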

\begin{proof}
Setting 
\be
\Phi(X_1,X_2)= \Big\{ \frac{1}{2}X_1X_2^2-\frac{1}{2}X_1^2X_2+\frac{1}{6}X_1^3-X_1^2M_{1,2}+\frac{1}{2}X_1M_{1,2}^2\Big\}
\ee
and then applying the standard theorem of U-statistics \citep{L}, we obtain
\be
\sigma^2=Var \Big\{E\{\Phi(X_1,X_2|X_1)\}+E\{\Phi(X_2,X_1|X_1)\}\Big\},
\ee
in which
\be
\ba
E\Big\{X_1^2\min(X_1,X_2)|X_1\Big\} &=E\Big\{X_1^2 \min(X_1,X_2)I(X_1<X_2)|X_1\Big\} + E\Big\{X_1^2 X_2I(X_1>X_2)|X_1\Big\}\\
&=X_1^3\bar{F}(X_1)+X_1^2\int_0^{X_1}xdF(x)
\ea
\ee
and
\be
\ba
E\Big\{X_2^2\min(X_2,X_1)|X_1\Big\} &= E\Big\{X_2^2 \min(X_2,X_1)I(X_1<X_2)|X_1\Big\} + E\Big\{X_2^2 \min(X_2,X_1)I(X_1>X_2)|X_1\Big\}\\
&=X_1\int_{X_1}^{\infty}(x^2+x^3)dF(x)
\ea
\ee
Direct calculations will establish the results.
\end{proof}

\section{Simulated Power Studies}
The empirical power estimates of our test statistics are obtained based on $10000$ replicates of the Monte Carlo method simulating common alternative life distributions with increasing failure rate. Weibull, Gamma, and linear failure rate family with the distribution functions,
\be 
{\bar F}_1(x)=e^{-x^{\theta}},~x>0,~~\theta \ge 1,
\ee 
\be 
{\bar F}_2(x)=e^{-x-\frac{\theta}{2}x^2},~x>0,~~\theta \ge 0,
\ee
and  
\be 
{\bar F}_3(x)=\frac{1}{\Gamma(\theta)}\int_x^{\infty}e^{-u}u^{\theta-1}du,~x>0,~~\theta \ge 0
\ee 
respectively, are among these alternatives.
\\
We report the power values for samples of size $n=10$, $n=20$, and $n=30$ in Table 1 when $1 \le \theta $. These estimates of power were obtained for tests where $\alpha=0.05$. More extensive simulations and results, along with the tables, are available upon request from the authors. We note from Table 1 that is $\theta$ $>$1 (increasing the distance of the exponential), the proposed test get better power; on the other hand the power decreases as $\theta$ approaches to $1$ , which reduces to the ordinary exponential distribution.  
\subsection{Numerical Example}

\begin{table}
 \begin{center}
\caption{\bf Power Estimates Using $\alpha=0.05$}
\begin{tabular}{|l|llll|}
 \hline
 \small{\bf Distribution}    &\small{{\bf Parameter $\theta$}}    &\small{}             &\small{{\bf Sample Size}}    &\small{}   \\
\small{}      &\small{}   &\small{$n=10$}            &\small{$n=20$}   &\small{$n=30$}\\
\hline
\small{}    &\small{1}           &\small{0.4693}          &\small{0.7078}                   &\small{0.7870}\\
\small{\bf Gamma Family} \small{}    &\small{2}           &\small{0.8635}          &\small{0.9794}                   &\small{0.9914}\\
   &\small{3}           &\small{0.9641}          &\small{0.9980}                   &\small{0.9993}\\
\hline

\small{}    &\small{1}           &\small{0.7672}          &\small{0.9700}                   &\small{0.9920}\\
\small{\bf Linear Failure Rate Family} \small{}    &\small{2}           &\small{0.8455}          &\small{0.9890}                   &\small{0.9977}\\
   &\small{3}           &\small{0.8767}          &\small{0.9929}                   &\small{0.9999}\\
\hline

\small{}    &\small{1}           &\small{0.4641}          &\small{0.7093}                   &\small{0.7754}\\
\small{\bf Weibull Family} \small{}    &\small{2}           &\small{0.9930}          &$\approx$1                  &$\approx$1                  \\
   &\small{3}           &$\approx$1                           &$\approx$1                                    &$\approx$1                  \\
\hline
\end{tabular}
\end{center}
\end{table}

Abouammoh et.al \citep{AAQ} worked on data which represented 40 patients suffering from leukemia from a Ministry of Health Hospital in Saudi Arabia. The life time number of days, in increasing order, are: 115, 181, 255, 418, 441, 461, 516, 739, 743, 789, 807, 865, 924, 983, 1024, 1062,1063, 1165, 1191, 1222, 1222, 1251, 1277,
1290, 1357, 1369, 1408, 1455, 1478, 1549, 1578, 1578,1599, 1603, 1605, 1696, 1735, 1799, 1815, 1852.\\
The test statistic for this data set is ${\hat \Delta}=-0.871605$. When $\alpha=0.05$, ${\hat \sigma}_0^{-1}n^{\frac{1}{2}}{\hat \Delta}=-3.615245 \ll -1.6448$. Thus, from this calculated value, we must reject the null hypothesis and we conclude that this specific set of data has the $ODL$ property.

\section{Conclusion}
The main goal of this paper is to apply nonparametric testing procedures to test if the life time of a system, driven by DMN, falls in the overall decreasing life (ODL) class or not. The mathematical tools and the proposed test introduced here are of general application and should be of general interest to those individuals studying biostatistics, biology, and safety management systems engineering. The results of this paper can also be easily extended to situations involving right-censored data. It is our hope that this method and its application will open the way to the study of more complicated and realistic examples.
\bibliographystyle{authordate4}
\bibliography{ms}
\end{document}